\documentclass[reqno, intlimits, sumlimits]{amsart} 

\usepackage{lmodern,bbm} 

\usepackage{hyperref}
\hypersetup{colorlinks=true,linkcolor=black,citecolor=black,filecolor=black,urlcolor=black}
\usepackage[left=3.2cm,right=3.2cm, top=4cm, bottom=3cm]{geometry}

\theoremstyle{plain}
\newtheorem{theorem}			{Theorem}
\newtheorem{lemma}	[theorem]	{Lemma}
                     
\theoremstyle{definition}

\newtheorem{rem}	[theorem]	{Remark}

\numberwithin{equation}{section}
\numberwithin{theorem}{section}

\newcommand{\E}{{\mathbb{E}}}
\newcommand{\N}{{\mathbb{N}}}
\newcommand{\R}{{\mathbb{R}}}
\newcommand{\Z}{{\mathbb{Z}}}
\renewcommand{\P}{{\mathbb{P}}}


\begin{document}

 \title[Reconstructing the environment seen by a RWRE with errors]{Reconstructing a recurrent random environment from a single trajectory of a Random Walk in Random Environment with errors}
\author{Jonas Jalowy}
 \address{Jonas Jalowy, Institute for Mathematical Stochastics, University of M\"unster, Germany}
  \email{jjalowy@wwu.de}
 \author{Matthias L\"owe}
 \address{Matthias L\"owe, Institute for Mathematical Stochastics, University of M\"unster, Germany.}
 \email{maloewe@math.uni-muenster.de}
 \date{\today}
 \keywords{Random walk in random environment; Random walk in Random Scenery; Reconstruction} 
 \subjclass[2020]{60K37; 60J10} 
 \thanks{Research was funded by the Deutsche Forschungsgemeinschaft (DFG, German Research Foundation) under Germany 's Excellence Strategy EXC 2044-390685587, Mathematics M\"unster: Dynamics-Geometry-Structure. JJ is funded by the DFG through the SPP 2265 \emph{Random Geometric Systems}}
 
\begin{abstract}
We consider one infinite path of a Random Walk in Random Environment (RWRE, for short) in an unknown environment.
This environment consists of either i.i.d.\ site or bond randomness.
At each position the random walker stops and tells us the environment it sees at the point where it is, without telling us, where it is. These observations $\chi'$ are spoiled by reading errors that occur with probability $p<1$. We show: If the RWRE is recurrent and satisfies the standard assumptions on such RWREs, 
then with probability one in the environment, the errors, and the random walk we are able reconstruct the law of the environment. For most 
situations this result is even independent of the value of $p$. 
If the distribution of the environment has a non-atomic part, we can even reconstruct the environment itself, up to translation.
\end{abstract}

\maketitle

\section{Introduction}
Random walks in a random environment have been an extremely active and challenging field of research over the past decades. We refer the interested reader to \cite{ZeitouniRWRE,BS_RWRE, biskup_RWRE} for surveys of classic results. In this note we will consider
two settings of the RWRE:
The first is Sinai's walk (cf. \cite{Solomon}, \cite{Sinai82}), a situation with site randomness. We will refer to is as situation i). To define it, let
$\omega: \Z \to (0, 1)$ be a doubly infinite sequence of random variables. The random walk $X : \N_0 \to \Z$ in the environment
$\omega$ is a Markov chain. It starts in $0$ and its distribution  $\P_\omega$ is given by the transition probabilities in $z \in \Z$ and at time $n \in \N_0$
$$
\P_\omega(X_{n+1}= z + 1|X_n = z)=1-\P_\omega(X_{n+1}= z - 1|X_n = z)=\omega(z).
$$
$\omega$ is randomly chosen according to some probability measure $\P$ and in this situation we will assume the following:
\begin{enumerate}\label{citeria}
\item $\P$ is a product measure, i.e.\ for some probability measure $\varrho$ on $(0,1)$ (equipped with the Borel $\sigma$-field) we have $\P=\varrho^{\Z}$ and we call $\P_\omega$ the quenched law of the process.
\item There exists $\kappa$ such that $0<\kappa<\omega(0)<1-\kappa$ $\varrho$-almost surely.
\item
\begin{equation}\label{eq:sigma} 0 < \sigma^2:= \E\left(\log \frac{\omega(0)}{1-\omega(0)}\right)^2 < \infty
\end{equation}
\item
\begin{equation}\label{eq:recu}
\E\left(\log \frac{\omega(0)}{1-\omega(0)}\right)=0.
\end{equation}
\end{enumerate}
It is well-known and was already shown by Solomon that this RWRE is recurrent, if and only if \eqref{eq:recu} holds. Hence we will require that our RWRE is recurrent.

The second situation is a random conductivity model as in 
\cite{biskup_RWRE}, we will refer to it as ii). Again, we take a
doubly infinite sequence of random variables $\omega=(\omega(z))_{z \in \Z}$ of i.i.d.\ random variables with values in $(0, \infty)$, where $\omega(z)$ denotes the conductance of the edge $(z,z+1)$. The random walk $X : \N_0 \to \Z$ in the environment
$\omega$ again is a Markov chain that starts in $0$ and its distribution $\P_\omega$ is given by the transition probabilities
$$
\P_\omega(X_{n+1}= z + 1|X_n = z)=1-\P_\omega(X_{n+1}= z - 1|X_n = z)=\frac{\omega(z)}{\omega(z-1)+\omega(z)}
$$
for position $z \in \Z$ and at time $n\in \N_0$. In this situation we will assume the following for the distribution $\P$ of the environment $\omega$:
\begin{enumerate}
\item $\P$ is a product measure of a probability measure $\varrho$ on $\R^+$ with the Borel $\sigma$- field. Thus $\P=\varrho^{\Z}$ and again we call $\P_\omega$ the quenched law of the process.
\item There exists $D \in (0,1)$ such that $0<D<\omega(0)<1/D$ $\varrho$-almost surely. This condition is also called {\it uniform ellipticity}.
\end{enumerate}
It is known that under these conditions the RWRE is recurrent, see \cite[p.305]{biskup_RWRE}. Furthermore, we assume that $\omega$ is truly random and not deterministic, i.e. $\varrho\neq \delta_\alpha$.

The question we want to consider in the current note is inspired by two sources. In \cite{GantertNagel} the authors ask, whether one can almost surely reconstruct the law of a random environment in Sinai's walk from the observations of one trajectory and answer this question in the affirmative. A related question for RWRE was studied by Adelman and
Enriquez in \cite{AdelmanEnriquez}.
The work in \cite{GantertNagel}, in turn, is inspired by the scenery reconstruction problem formulated by den Hollander and
Keane \cite{dHK86}. Here $\Z$ is endowed with a random coloring (the scenery), which is only observed from the unknown positions of a random walker.
The question is whether this coloring can be reconstructed almost surely. Significant progress in this problem was made by Matzinger and co-authors, see e.g. \cite{Mat99,scen3,scen4,scen6, scen7,scen5, scen9,scen8, scen1,scen2} or the survey papers \cite{dHS06},
\cite{scendeutsch}.
In particular, in \cite{scen6} the authors show that reconstruction is still possible, if we have noisy observations with a very small percentage of errors. A new, but related question was investigated  by Lakrec, who considers a scenario where entries may be systematically changed \cite{lakrec}. Similar problems in the area of coin tossing were studied in \cite{HK97} and \cite{LPP01}. In this note we will investigate the same problem of noisy observations for the question studied by Gantert and Nagel \cite{GantertNagel}
and the same problem for RWRE with bond disorder.

To be more precise, assume that the environment situation i) or in situation ii) cannot be observed directly. However we know that the above assumptions on $\omega$ are fulfilled. Let $(X_n)_{n\in \N_0}$ be one infinite path of random walk in the random environment $\omega$ which cannot be observed directly, either. Denote by $\chi'(n)$ the environment seen by the walker at time $n$. This means in situation i) we let $\chi'(n)=\omega(X_n)$ observe the probability for a jump to the right of the vertex that $X_n$ is currently at and in situation ii) the walker $X_n$ tells the conductance of the edge it crosses, hence $\chi'(n)=\omega(\min(X_{n-1},X_n))$.
Assume moreover that we are given a corrupted version $\chi=(\chi(n))_{n\in \N_0}$ of $\chi'=(\chi'(n))_{n\in \N_0}$ given by
$$
\chi(n)=\begin{cases} \chi'(n) & \mbox{with probability } 1-p\\
Y(n) & \mbox{with probability } p.
\end{cases}
$$
Here, the $(Y(n))_n$ are some i.i.d. random variables with values in $(0,1)$ in situation i) and with values in $(D, 1/D)$
in situation ii). They are independent of everything else. Moreover, $p \in [0,1)$, and also for each $n$ we decide whether we take $\chi'(n)$ or $Y(n)$ independently of everything else. In other words
\begin{equation}\label{eq:inotherwords}
\chi(n)= \chi'(n)(1-\xi(n))+ Y(n) \xi(n)
\end{equation}
and the $\xi:=(\xi(n))_n$ are i.i.d.\ Bernoulli random variables with values in $\{0,1\}$ and parameter $p$, independent of everything else. We denote by $\nu$ the distribution of $Y(0)$ and by $\nu_a$ and $\nu_{na}$ its atomic and non-atomic part, respectively.
In a similar fashion we decompose $\varrho=\varrho_a + \varrho_{na}$ into an atomic part $\varrho_a$ and a non-atomic part
$\varrho_{na}$ and our techniques will rely on the assumption that $\nu_a$ and $\varrho_a$ have disjoint support.
The question is: Can we reconstruct the law of $\omega$ or even $\omega$ by just knowing $\chi$?

The proofs will be  easier, when the distribution of $\omega$ has a non-atomic part than in the situations where there are only atoms in the distribution of the environment. However, in both cases we will be able to reconstruct the environment almost surely and we will have no restrictions on $p<1$ whatsoever, if the distribution of the environment has a non-atomic part or if we are in situation ii).  
Note that this is way less restrictive than the assumption on the error probability in \cite{scen6} and  that even for the problem of guessing a number with errors (or lies), the error probability needs to be smaller than
$1/2$, see \cite{SpencerWinkler}.
In situation i), however, we will have a condition on $p$ when the distribution of $\varrho$ is purely atomic. 
Denoting by $\mathcal{M}:=\mathcal{M}^1((0,1))$ and $\mathcal{M}':=\mathcal{M}^1((D,1/D))$, respectively, the set of probability measures on the open unit interval and on the interval $(D,1/D)$, respectively, our result reads as follows:

\begin{theorem}\label{theo:theo1}
Let $\varrho_a$ and $\nu_a$ have disjoint support, $p\in(0,1)$ be arbitrary and consider either situation i) or situation ii). In situation i), when $\varrho=\varrho_a$ is purely atomic, we additionally assume that $0<p<\frac{\kappa}{\kappa+1}$ is known. Then, almost surely in the environment, the errors, and the realization of the RWRE, we can reconstruct $\varrho$ from $\chi$.

This means there is a measurable mapping
$\mathcal{A}: (0,1)^{\N_0}\to \mathcal{M}$ (or $\mathcal{A}: (0,1)^{\N_0}\to \mathcal{M}'$ in situation ii)), such that for almost all realizations of $\xi$, $\nu^{\N_0}$-almost all realizations of
$(Y(n))_n$, $\P$-almost all $\omega$, and $\P_\omega$-all realizations of $X$ we have
\begin{equation}\label{eq:reconstruct}
\P_\omega(\mathcal{A}(\chi)= \varrho)=1.
\end{equation}
\end{theorem}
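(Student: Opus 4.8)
The plan is to use recurrence to convert the corrupted stream $\chi$ into a cleaned i.i.d.\ sample from $\varrho$, and then to apply the strong law of large numbers. The driving observation is that in both situations the walk is not merely recurrent but neighbourhood recurrent: its range $[L_n,R_n]$ eventually exhausts $\Z$ and every site (resp.\ every edge in situation ii)) is visited infinitely often. Consequently, each time the range is extended to a \emph{fresh} site $z=R_n+1$, the value $\omega(z)$ is an independent draw from $\varrho$: in situation i) the move onto $z$ is governed by $\omega(R_n)$ together with the walk's past, both independent of $\omega(z)$, so conditioning on the range extension does not bias $\omega(z)$. Hence the environment values read at successive first-visit times form an i.i.d.\ $\varrho$-sequence whose empirical law converges a.s.\ to $\varrho$. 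Equivalently, it suffices to reconstruct the environment word $(\omega(z))_{z\in\Z}$ up to translation, since the spatial law of large numbers then yields $\varrho=\lim_{M\to\infty}\frac{1}{2M+1}\sum_{|z|\le M}\delta_{\omega(z)}$. Thus everything reduces to recovering, from $\chi$ alone, either the first-visit values or the spatial word.

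The cleaning step rests on the dichotomy between true readings, which \emph{repeat}, and non-atomic errors, which do \emph{not}. Because the walk returns to every site infinitely often, each true value appears infinitely often in $\chi'$ and hence, since a positive fraction $1-p$ of readings survive, infinitely often in $\chi$. By contrast a value produced by $\nu_{na}$ is almost surely distinct from all other readings and from every $\omega(z)$, because $\nu_{na}$ is non-atomic, and so occurs only finitely often. The atoms of $\nu$ do occur infinitely often, but they sit on a support disjoint from that of $\varrho_a$ and appear as genuinely i.i.d.\ insertions in time, with positive temporal density $p\,\nu(\{y\})$ and no correlation to their neighbours, whereas the true readings inherit the temporal correlations of the walk. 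These features let me, for $\P$-a.e.\ environment, measurably identify the set of true values and discard the corruptions from $\chi$.

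When $\varrho$ has a non-atomic part the distinct values $\omega(z)$ act as labels for the sites, so after cleaning I can read off the spatial word: consecutive uncorrupted readings come from neighbouring sites, and recurrence guarantees that every adjacency $\{\omega(z),\omega(z+1)\}$ is observed infinitely often and can thus be separated from spurious coincidences. Stitching these adjacencies together reconstructs $(\omega(z))_z$ up to translation and reflection, and the spatial law of large numbers delivers $\varrho$ for every $p<1$, with no need to know $p$. Situation ii) is analogous with edges in place of sites; the only change is that the first-crossing conductances are size-biased by $\omega(R_n)/(\omega(R_n-1)+\omega(R_n))$, which is why I reconstruct the conductance word spatially rather than reading $\varrho$ off the first-crossing values directly.

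The genuinely hard case, and the one forcing the hypothesis $0<p<\tfrac{\kappa}{\kappa+1}$, is situation i) with $\varrho=\varrho_a$ purely atomic. Now the finitely (or countably) many atom values recur at \emph{many} sites, so a value no longer identifies a site and the labelling idea collapses; moreover the temporal empirical measure of $\chi$ converges not to $\varrho$ but to the local-time-weighted localization measure of Sinai's walk, which is the wrong answer. The remedy is to reconstruct the trajectory increments, and hence the spatial word, from $\chi$ together with the \emph{known} value of $p$: at a reconstructed site the readings are the true atom $\omega(z)$ corrupted independently with probability $p$, so a majority/consistency vote recovers $\omega(z)$ once the true signal dominates the noise. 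The ellipticity bound $\kappa\le\omega(0)\le1-\kappa$ quantifies how strongly the environment constrains the walk's moves, and $p<\kappa/(\kappa+1)$ is precisely the error budget under which this reconstruction is almost surely consistent. I expect this trajectory reconstruction in the purely atomic regime --- disentangling the walk's moves from the reading errors when equal values carry no positional information --- to be the main obstacle; the non-atomic and conductance cases, where distinct values or the edge structure supply the missing labels, are comparatively robust and impose no constraint on $p$.
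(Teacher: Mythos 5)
Your high-level plan for the case $\varrho\neq\varrho_a$ (use non-atomic values as location labels, discard one-off non-atomic errors by recurrence, detect noise atoms via their lack of temporal correlation) matches the paper's strategy, and your observation that first-crossing conductances in situation ii) are size-biased is correct and perceptive; in the paper this bias appears explicitly as the factor $\alpha$ in the identity $h(\alpha)=2(1-p)\varrho(\alpha)\alpha/Z$. However, the two places where the theorem actually requires work are left open. First, your classification of repeated values into $\mathfrak{A}_\varrho$ versus $\mathfrak{A}_\nu$ is only asserted. The test you sketch (true readings are temporally correlated, noise atoms are not) compares limiting frequencies, but in situation i) the observation process of Sinai's walk is not known to be ergodic, so for $\alpha\in\mathfrak{A}_\varrho$ the relevant limits need not exist at all; the paper's Lemma \ref{lem:centrallemma3} has to treat non-existence (or vanishing) of the limit as itself evidence for $\alpha\in\mathfrak{A}_\varrho$, and when the limits do exist the separation is quantitative: $h(\alpha\mid\alpha\alpha)\ge(1-p)\kappa$ for environment atoms versus $h(\beta\mid\beta\beta)=p\nu(\beta)\le p$ for noise atoms, which are disjoint precisely because $p<\kappa/(\kappa+1)$. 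That hypothesis is thus a classification threshold, not, as you propose, an error budget for a majority vote. Second, your plan for purely atomic $\varrho$ in situation i) is circular: a ``majority/consistency vote at a reconstructed site'' presupposes that you can tell when the walk revisits a given site, which is exactly the information that is lost when values no longer label sites; you concede this is ``the main obstacle,'' i.e.\ the hardest case of the theorem is not proved. The paper resolves it by a different route: after Lemma \ref{lem:centrallemma3} removes the noise atoms, it exploits $p<1$ to extract uncorrupted stretches of observations of diverging length and runs the Gantert--Nagel tree/straight-crossing estimator on those stretches.

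The other genuine omission is situation ii) with purely atomic $\varrho$, which your proposal never addresses: there are no non-atomic labels there, and ``the edge structure'' supplies none. The paper handles both the classification and the reconstruction in situation ii) through the environment viewed from the particle: the chain $\bar\omega_n=\tau_{X_n}\omega$ is ergodic with invariant law $\mathbb{Q}$, $d\mathbb{Q}/d\P=(\omega(-1)+\omega(0))/Z$, and the ergodic theorem (Lemma \ref{lem:centrallemmaii}) yields exact formulas such as $h(\alpha)=2(1-p)\varrho(\alpha)\alpha/Z$ together with the strict inequality $h(\alpha\mid\alpha)>h(\alpha)$ characterizing $\alpha\in\mathfrak{A}_\varrho$ (this is also where the standing assumption $\varrho\neq\delta_\alpha$ enters). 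Reconstruction then proceeds by inverting the bias, $\varrho(\alpha)\propto h(\alpha)/\alpha$, with the unknown constants $p$ and $Z$ eliminated by the normalization $\sum_\alpha\varrho(\alpha)=1$; note in particular that no knowledge of $p$ is needed here. Noticing the size bias, as you do, is not a substitute for the machinery that corrects it: without an analogue of this ergodic-theorem computation (or of Lemma \ref{lem:centrallemma3}) neither your cleaned sample nor your spatial word is shown to be recoverable from $\chi$, and the purely atomic cases of the theorem remain unproved.
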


As we will see in the proof for situation i) and $\varrho=\varrho_a$, the result continues to hold for unknown $0<p<\frac{\kappa}{\kappa+1}$, if instead either the number of atoms of $\varrho_a$ or of $\nu_a$ is finite and known. In the non-atomic case, even the environment itself can be reconstructed.

\begin{theorem}\label{theo:theo3}
If $\varrho_a$ and $\nu_a$ have disjoint support, $\varrho\neq \varrho_a$ and $p$ is arbitrary, the environment can be reconstructed almost surely in the environment, the errors, and the realization of the RWRE up to translation in both, situations i) and ii). This means there is a measurable
 mapping $\mathcal{A'}: (0,1)^{\N_0}\to (0,1)^{\Z}$ (or $\mathcal{A'}: (0,1)^{\N_0}\to (D,1/D)^{\Z}$ in situation ii)) such that for almost all realizations of $\xi$, $\nu^{\N_0}$-almost all realizations of $(Y(n))_n$, $\P$-almost all $\omega$, and $\P_\omega$-almost all $X$ 
 \begin{equation}\label{eq:reconstruct2}
\P_\omega(\mathcal{A'}(\chi) \sim \omega)=1.
\end{equation}
Here for any two environments $\omega, \omega'$ the symbol $\omega \sim \omega'$ indicates that
$\omega$ and $\omega'$ agree up to translation.
\end{theorem}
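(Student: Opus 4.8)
The plan is to bootstrap on Theorem~\ref{theo:theo1}: I may assume that $\varrho$, and in particular the known countable support $\mathrm{supp}(\varrho_a)$ of its atomic part, has already been reconstructed from $\chi$. The whole argument then rests on one structural observation. Fix a $\P$-typical environment $\omega$. Since $\varrho\neq\varrho_a$, the set $S$ of \emph{non-atomic sites} $z$ with $\omega(z)\notin\mathrm{supp}(\varrho_a)$ has positive density by the law of large numbers, and almost surely the values $(\omega(z))_{z\in S}$ are pairwise distinct and avoid the countable set $\mathrm{supp}(\varrho_a)\cup\mathrm{supp}(\nu_a)$. I call these values \emph{tags}. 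Because $\nu(\{v\})=0$ for every tag $v$, a Borel--Cantelli argument shows that almost surely \emph{every} occurrence of $v$ in $\chi$ is an error-free reading, so $\{n:\chi(n)=v\}$ is exactly the set of error-free visit times of the walk to the unique site $z(v)$ with $\omega(z(v))=v$. Thus each tag is a perfect, error-proof position marker. (In situation ii) one reads sites as edges and visits as edge-crossings throughout.)

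First I would isolate the tags inside $\chi$. A value $v\notin\mathrm{supp}(\varrho_a)$ occurring infinitely often is either a tag or an atom of the noise $\nu$, and these are told apart by their asymptotic frequency: since a recurrent RWRE is null recurrent, $\tfrac1N\#\{n\le N:X_n=z\}\to0$ for every fixed $z$, so each tag has asymptotic density $0$ in $\chi$, whereas a noise atom $a^\ast$ satisfies $\chi(n)=a^\ast$ iff an error occurs with $Y(n)=a^\ast$ (the environment never equals $a^\ast$), so by the law of large numbers it has positive density $p\,\nu(\{a^\ast\})$. Hence the tags are exactly the values occurring infinitely often, with zero asymptotic density, outside $\mathrm{supp}(\varrho_a)$. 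Next I recover the geometry of $S$: for two tags $v,v'$ set $d(v,v')=\min\{|n-n'|:\chi(n)=v,\ \chi(n')=v'\}$. By the marker property each admissible pair of times pins the walk at $z(v)$ and $z(v')$, so $|n-n'|\ge|z(v)-z(v')|$; conversely the walk performs, infinitely often, a monotone traversal from $z(v)$ to $z(v')$ of exactly that length with error-free endpoints, so the minimum is attained and $d(v,v')=|z(v)-z(v')|$. The pairwise distances determine the positions of all non-atomic sites on $\Z$ up to translation and reflection.

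It remains to fill in the atomic sites, whose positions are now known (they are the gaps between consecutive tag positions). Let $v_L,v_R$ be tags that are nearest neighbours in $S$, at distance $m=|z_L-z_R|$, so that the $m-1$ intermediate sites are all atomic. I collect all time intervals $[n,n+m]$ with $\chi(n)=v_L$ and $\chi(n+m)=v_R$; by the distance computation these are precisely the length-$m$, hence monotone, traversals of the walk between $z_L$ and $z_R$. Along offset $i$ the walk sits at the $i$-th intermediate site $s_i$, so $\chi(n+i)$ equals the atom $\omega(s_i)\in\mathrm{supp}(\varrho_a)$ with probability $1-p$ and is noise otherwise. Crucially, disjointness of $\mathrm{supp}(\varrho_a)$ and $\mathrm{supp}(\nu_a)$ forces $\nu(\{a\})=0$ for every $a\in\mathrm{supp}(\varrho_a)$, so noise never produces a value in $\mathrm{supp}(\varrho_a)$; restricting the empirical distribution of $\chi(n+i)$ over these traversals to $\mathrm{supp}(\varrho_a)$ therefore leaves only $\omega(s_i)$, appearing with positive frequency $1-p$. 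This identifies every atomic value and completes the scenery up to translation and reflection, with no constraint on $p<1$.

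Finally I would remove the reflection ambiguity using the dynamics, which are \emph{not} reflection symmetric: in situation i) the walk steps right from $z$ with probability $\omega(z)$, and in situation ii) the drift at a vertex is governed by the ratio of the two adjacent conductances. Almost surely there is a site $z$ with $z-1,z,z+1\in S$ and $\omega(z)\neq\tfrac12$ (resp.\ an analogous local configuration of edge-tags); comparing the empirical conditional frequencies of the two neighbouring tags following an occurrence of the tag at $z$ with the \emph{known} value $\omega(z)$ reveals which physical neighbour lies to the right, fixing a global orientation and upgrading the reconstruction to ``up to translation''. I expect the main obstacle to be promoting the various ``positive probability at each of infinitely many opportunities'' statements to almost-sure conclusions, and the empirical frequencies to almost-sure limits, simultaneously for $\P$-a.e.\ environment and a.e.\ realization of walk, errors and noise. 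The opportunities (visits, monotone traversals) are highly dependent, so the clean tool is a conditional Borel--Cantelli lemma driven by recurrence for the ``infinitely often'' claims, together with the ergodicity of the quenched chain and the law of large numbers for the i.i.d.\ error and noise sequences for the frequency limits; assembling these into a single null set is the delicate bookkeeping at the heart of the proof.
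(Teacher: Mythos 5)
Your proposal follows, in substance, the same route as the paper's proof: realizations of $\varrho_{na}$ serve as unique position markers that the noise almost surely cannot imitate (this is exactly what lets the paper discard corrupted strings), shortest time gaps between marker occurrences recover distances in $\Z$ (the paper's ``shortest crossings'' $S_{\omega^{(1)},\omega^{(2)}}$), monotone traversals between adjacent markers fill in the atomic sites, and a site with asymmetric dynamics fixes the orientation. The genuine differences are implementation choices: you bootstrap on Theorem \ref{theo:theo1} to know the support of $\varrho_a$ and separate tags from atoms of $\nu$ by a zero-density criterion, whereas the paper reuses its detection machinery (Lemma \ref{lem:centrallemma}, resp.\ Lemma \ref{lem:centrallemmaii}); and you reconstruct globally from the full distance matrix, whereas the paper assembles overlapping pieces sequentially. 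Your density test is valid, but ``a recurrent RWRE is null recurrent'' is not automatic and should be justified via non-summability of the reversible measure (in situation i) the potential oscillates unboundedly since $\sigma^2>0$; in situation ii) $\pi(z)=\omega(z-1)+\omega(z)\ge 2D$); likewise the ``infinitely many straight, error-free crossings'' claims need the conditional Borel--Cantelli argument you mention --- the same level of detail the paper itself glosses over.

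The one step that genuinely fails --- and it fails in the paper's own final paragraph as well --- is fixing the orientation in situation ii). There is no ``analogous local configuration of edge-tags'': if $\omega^\dagger(z):=\omega(-z-1)$ denotes the reflected conductance field, then coupling via $X^\dagger=-X$ shows that $X^\dagger$ is distributed as the walk in $\omega^\dagger$ and that $\omega^\dagger\big(\min(X^\dagger_{n-1},X^\dagger_n)\big)=\omega\big(\min(X_{n-1},X_n)\big)$, so the observation sequence $\chi$ has exactly the same law under $\omega$ and under $\omega^\dagger$. Concretely, every conditional frequency you could compare is reflection-symmetric; for instance, with $a=\omega(z-1)$ and $b=\omega(z)$, the asymptotic frequencies of reading $a$ immediately followed by $b$, and of $b$ immediately followed by $a$, are both proportional to $ab/(a+b)$ by reversibility. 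Hence no statistic of $\chi$ can tell $\omega$ from $\omega^\dagger$, and since almost surely $\omega\not\sim\omega^\dagger$, in situation ii) one can only hope to reconstruct up to translation \emph{or} reflection. In situation i) your orientation argument is sound: there the readings are the transition probabilities themselves, the reflection-invariant partner would be $1-\omega(-\cdot)$ (which produces different readings), and comparing the frequencies of the two neighbouring tags observed after visits to a site with known value $\neq 1/2$ does break the tie, exactly as in the paper.
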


\begin{rem}
Situation i) with $p=0$  is treated in \cite{GantertNagel}. However, their proof does not carry over to situation ii). In particular, Theorem \ref{theo:theo1} with $p=0$ thus gives the result of Gantert and Nagel in situation ii). 
\end{rem}

\section{Proofs}
\begin{proof}[Proof of Theorem \ref{theo:theo1} situation i)]
We divide the proof into case (a) $\varrho_a \neq \varrho$ and case (b) of a purely atomic $\varrho= \varrho_a$.

Case (a): We will adapt the strategy from \cite{GantertNagel} to our setting of corrupted observations. The set of all possible atoms of
$\varrho+\nu$ is given by the set
$$
\mathfrak{A}_{\varrho+\nu}=\{\alpha: \exists n \text{ with } \chi(n)=\chi(n+1)=\alpha\}
$$
because only atoms of $\varrho+\nu$ can appear directly one after the other in $\chi$ (and they will). First of all notice, that, if $\alpha\notin \mathfrak{A}_{\varrho+\nu}$ appears in $\chi$, we can tell with probability one, whether it is a realization of $\varrho$ or whether it is a reading error created by $Y$. Indeed, if $\alpha$ is a realization of $\varrho$, it will occur in $\chi$ infinitely often, because we assume that RWRE is recurrent. On the other hand, non-atoms of $\nu$ will occur only once in $\chi$ (if at all).
The issue is now to determine whether a point $\alpha \in  \mathfrak{A}_{\varrho+\nu}$  is an atom of $\varrho$ or of $\nu$. 

\begin{lemma}\label{lem:centrallemma}
In the above situation there is a test that decides with probability one correctly, whether a point $\alpha \in  \mathfrak{A}_{\varrho+\nu}$  is an atom of $\varrho$ or of $\nu$.
\end{lemma}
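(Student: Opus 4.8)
The plan is to exploit a structural dichotomy in how $\alpha$ can appear in $\chi$. By the disjoint-support assumption, $\alpha$ is an atom of exactly one of $\varrho,\nu$. If $\alpha$ is an atom of $\nu$ but not of $\varrho$, then $\varrho(\{\alpha\})=0$, so $\P$-a.s.\ no site carries the value $\alpha$ and the genuine observation $\chi'(n)=\omega(X_n)$ never equals $\alpha$; hence $\chi(n)=\alpha$ if and only if the error fires with $Y(n)=\alpha$, i.e.\ $\xi(n)=1$ and $Y(n)=\alpha$. Since $(\xi(n),Y(n))_n$ are i.i.d.\ and independent of everything else, the appearance process $(\ind\{\chi(n)=\alpha\})_{n}$ is an i.i.d.\ Bernoulli process of parameter $p\,\nu(\{\alpha\})\in(0,1)$, completely independent of the walk. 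First I would record the light-tail signatures this forces: the gaps between successive appearances of $\alpha$ are i.i.d.\ geometric, so the largest gap $g_N(\alpha):=\max\{\,m-\ell : \ell<m\le N,\ \chi(\ell)=\chi(m)=\alpha,\ \chi(k)\neq\alpha\ \forall\ \ell<k<m\,\}$ among the first $N$ observations satisfies $g_N(\alpha)=\Theta(\log N)$ almost surely, and in particular $\limsup_{N}g_N(\alpha)/\log N<\infty$.

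If instead $\alpha$ is an atom of $\varrho$ but not of $\nu$, then the error never produces $\alpha$ (as $\nu(\{\alpha\})=0$), so $\chi(n)=\alpha$ forces $\xi(n)=0$ and $\omega(X_n)=\alpha$: every recorded $\alpha$ is a genuine, error-free visit to an $\alpha$-site. Because $\P=\varrho^{\Z}$ is a product measure with $\varrho(\{\alpha\})>0$, almost surely there are infinitely many $\alpha$-sites (and, along the same line, infinitely many adjacent pairs $z,z+1$ with $\omega(z)=\omega(z+1)=\alpha$); by recurrence the walk visits each of them infinitely often, so $\alpha$ still occurs in $\chi$ infinitely often. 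The separating statistic I would use is the same maximal gap $g_N(\alpha)$: the classifier declares $\alpha$ an atom of $\nu$ when $\limsup_N g_N(\alpha)/\log N<\infty$ and an atom of $\varrho$ otherwise. This is a measurable functional of $\chi$, and by the previous paragraph it is correct on $\nu$-atoms; it remains to show that for a $\varrho$-atom the maximal gap grows strictly faster than logarithmically, $\limsup_N g_N(\alpha)/\log N=\infty$ almost surely. I would deliberately phrase the test qualitatively, through the tail of the gaps, rather than as a quantitative frequency threshold: the appearance frequency and the conditional repeat probability depend on the unknown $p$ and on $\varrho$, and could in principle coincide numerically with the Bernoulli values, whereas the geometric-versus-heavy-tail distinction is structural and survives for every admissible $\varrho$ and every unknown $p<1$.

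The main obstacle is exactly this superlogarithmic lower bound for a $\varrho$-atom, and it is where the recurrence of the environment really enters. The point is that between two genuine $\alpha$-observations the walk must spend all its time either off the $\alpha$-sites or at $\alpha$-sites on which the error happens to fire; the recurrent RWRE has no well-behaved stationary environment-seen-from-the-particle, and its occupation times are governed by the random potential $V(z)=\sum_{i=1}^{z}\log\frac{1-\omega(i)}{\omega(i)}$, whose valleys trap the walk for durations with only polynomially (or slower) decaying tails. I would make this precise by Sinai-type localization: up to time $N$ the walk is typically confined to a valley of depth of order $\log N$, and crossing the internal barriers of such a valley, or descending into a fresh deeper valley containing the next reachable $\alpha$-site, costs a time that is with high probability larger than any fixed multiple of $\log N$. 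Producing a single such trapping window before time $N$ suffices to force $g_N(\alpha)\ge C\log N$ for every $C$ infinitely often, giving $\limsup_N g_N(\alpha)/\log N=\infty$. Quantifying these trapping times --- via standard potential-theoretic estimates for one-dimensional RWRE hitting and return times, together with a Borel--Cantelli argument over a sparse sequence of scales $N_k$ to upgrade ``with high probability'' to ``almost surely'' --- is the technical heart of the lemma; the $\nu$-atom side, by contrast, is an immediate consequence of the i.i.d.\ structure isolated in the first paragraph.
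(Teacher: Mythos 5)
Your treatment of the $\nu$-atoms is correct: if $\alpha\in\mathfrak{A}_\nu$ then $\varrho(\{\alpha\})=0$, so almost surely no site carries the value $\alpha$, the appearances of $\alpha$ in $\chi$ form an i.i.d.\ Bernoulli$(p\nu(\{\alpha\}))$ sequence independent of the walk, and the maximal gap is $\Theta(\log N)$. The problem is the other half, and it is not a technicality that ``standard potential-theoretic estimates'' will repair: the claim that every $\varrho$-atom has superlogarithmic maximal gaps is false in general. Your heuristic treats the locations of the Sinai traps and the locations of the $\alpha$-sites as if they were independent, but both are functions of the same $\omega$: the potential $V(z)=\sum_{i\le z}\log\frac{1-\omega(i)}{\omega(i)}$ rises exactly at sites whose value lies below $1/2$. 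Take the legitimate case-(a) environment $\varrho=\frac12\delta_\alpha+\frac12\varrho_{na}$ in which $\alpha<1/2$ is the \emph{only} value below $1/2$ in the support, and $\varrho_{na}$ is non-atomic, supported on values slightly above $1/2$ and tuned so that $\E\log\frac{1-\omega(0)}{\omega(0)}=0$; ellipticity, $0<\sigma^2<\infty$, recurrence, $\varrho\neq\varrho_a$ and disjointness from $\nu_a$ all hold. In this environment $V$ increases precisely at $\alpha$-sites, so every local minimum $z$ of $V$ satisfies $\omega(z+1)=\alpha$: the traps are bordered by $\alpha$-sites, and the walker localized at a trap reads $\alpha$ every other step up to i.i.d.\ masking. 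Moreover every non-$\alpha$-site pushes the walker with uniformly positive drift towards the next $\alpha$-site on its right, and after a masked visit there the walker is pushed back one step and immediately retries. Hence, conditionally on any history, the time to the next unmasked $\alpha$-reading has an exponential tail with uniform rate, up to an additive term of the order of the local $\alpha$-free stretch width, which is $O(\log\log N)$ inside the walker's range; the maximal gap up to time $N$ is then $O(\log N)$ almost surely, and your limsup criterion misclassifies this $\varrho$-atom as a $\nu$-atom. The mechanism you invoke requires trap bottoms sitting in $\alpha$-free stretches behind high barriers; since $\alpha$-sites have density $\varrho(\{\alpha\})>0$ this is at best a positive-probability event per trap (not ``with high probability''), and in the example above it is impossible.

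A structural warning sign you should have noticed: your test never uses the case-(a) hypothesis $\varrho\neq\varrho_a$, nor any restriction on $p$, so if it were valid it would apply verbatim to purely atomic $\varrho$, where the paper needs the additional assumption $p<\frac{\kappa}{\kappa+1}$ (Lemma \ref{lem:centrallemma3}). The paper's proof of the present lemma is entirely different and uses the non-atomic part of $\varrho$ decisively: realizations of $\varrho_{na}$ occupy almost surely unique sites, so a pattern $(\alpha_0,\alpha_1,\alpha,\alpha_2)$ with non-atomic $\alpha_0,\alpha_1,\alpha_2$, occurring infinitely often, pins down the walker's position at the moment it reads $\alpha$; given this localization, $\alpha\in\mathfrak{A}_\nu$ if and only if the triple $(\alpha_1,\alpha,\alpha)$ is ever observed, because for $\alpha\in\mathfrak{A}_\varrho$ errors can never produce $\alpha$ and both neighbours of the pinned site carry values different from $\alpha$, while for $\alpha\in\mathfrak{A}_\nu$ recurrence and Borel--Cantelli make the triple appear almost surely. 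Any correct argument here needs some such localization of the walk via markers, not a purely statistical gap criterion.
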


Note that in the case of purely non-atomic $\varrho=\varrho_{na}$, the test is not necessary and the reconstruction mechanism becomes more simple. 

\begin{proof}
Since $\varrho_a \neq \varrho$, there are realizations of $\varrho$ that are not atoms of $\varrho$.
Now let us take $\alpha \in \mathfrak{A}_{\varrho+\nu}$. Then, with probability one, in the observations $\chi$, there are $\alpha_0\neq\alpha_1 \neq \alpha_2$ created by $\varrho_{na}$ such that
$(\chi(n),\chi(n+1), \chi(n+2),\chi(n+3))=(\alpha_0,\alpha_1, \alpha, \alpha_2)$ for infinitely many $n\in \N$. The advantage is: Whenever we read the sequence
$(\alpha_0,\alpha_1, \alpha, \alpha_2)$ in $\chi$, we know exactly where the RWRE is, because the positions of the $\alpha_j$ in $\Z$ are unique. Therefore (exploiting recurrence of our RWRE) we can conclude, that $\alpha$ is an atom of $\nu$, if and only if there is also $m\in \N$ such that
$(\chi(m),\chi(m+1), \chi(m+2))=(\alpha_1, \alpha, \alpha)$.
Hence we can tell the atoms of $\varrho$ from the atoms of $\nu$.
\end{proof}

From now on we will discard the atoms of $\nu$ in $\chi$. To reconstruct $\varrho$, we use a similar strategy as proposed in \cite{GantertNagel}. To make sure to just consider new observations at points we have not seen before, we want to use
pieces of the environment that are not atoms of $\varrho$ as ''markers'' to detect that we are in a new point. Let us thus consider the set
\begin{align}\label{eq:Ti}
\mathcal T&:=\{n \in \N: \chi(n-2),\chi(n-1) \notin \mathfrak{A}_{\varrho+\nu}, 
\forall m<n-2: \chi(m) \notin \{\chi(n-2), \chi(n-1)\},\nonumber \\
&\qquad \quad \exists m >n: \chi(m)= \chi(n-2) \text{ and } \chi(m+1)= \chi(n-1) \}.
\end{align}
$\mathcal T$ consists of all the time points $n$, where we have been passing a ''marker'' previously, which has not been seen before. The last condition ensures that they are really part of the true environment and not of the noise $Y$.
To filter out the elements that appear due to a reading error, we introduce the set of observations made right after these markers, conditioned on that $(X_n)$ has not stepped back
\begin{eqnarray*}
\mathcal  O: =\{\chi(n) \notin \mathfrak{A}_{\nu}: n\in \mathcal T, \chi(n)\neq\chi(n-2), \exists m>n: \chi (m)=\chi(n)\}.
\end{eqnarray*}
Due to recurrence and the fact that $\chi(n)$ is not an atom the set $\mathcal O$ indeed consists of observations from the environment $\omega$ distributed according to $\varrho$. Indeed, notice that in $\mathcal O$ we rule out those elements at times $\mathcal T$ that are either realizations of one of the random variables $Y(n)$ or that are observations $\chi'(n)$ of $(X_n)$ at points to which $(X_n)$ never returns again. However, since we assumed that $(X_n)$ is recurrent the latter case will almost surely not happen. For $\P$-almost all $\omega$, $\mathcal O$ is infinite $\P_\omega$-almost surely by Borel-Cantelli and the elements of $\mathcal O$ are realizations of independent random variables $(\eta_k)_{k\in\N}$.
Then by the law of large numbers for empirical measures
$$
\frac 1 N \sum_{k=1}^N \delta_{\eta_k} \to \varrho \qquad \P_\omega \mbox {almost surely}
$$
(convergence is understood in the sense of the weak topology of measures).

Case (b):
Now we assume that $\varrho=\varrho_a$. In situation i) we can use the 
results of \cite{GantertNagel}, while in situation ii) to follow we will make use of the environment from the particle's point of view, which provides much more details than in situation i). 

The central element of the proof is again a version of Lemma \ref{lem:centrallemma}.

\begin{lemma}\label{lem:centrallemma3}
If $\varrho=\varrho_a$ in situation i), there is a test that decides with probability one 
(in the errors, in the environment $\omega$, and for $\P_\omega$-almost every realization of the walk) correctly, whether a point $\alpha \in  \mathfrak{A}_{\varrho+\nu}$  is an atom of $\varrho$ or of $\nu$.
\end{lemma}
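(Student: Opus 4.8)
The plan is to exploit that, because $\varrho=\varrho_a$ is purely atomic and $\varrho_a,\nu_a$ have disjoint support, every candidate value $\alpha\in\mathfrak{A}_{\varrho+\nu}$ falls into exactly one of two mutually exclusive regimes. If $\alpha$ is an atom of $\varrho$, then (since $\alpha\notin\operatorname{supp}\nu$ and $\nu_{na}(\{\alpha\})=0$) it can only ever be produced by a correct reading, i.e.\ $\{\chi(n)=\alpha\}=\{\xi(n)=0,\ \omega(X_n)=\alpha\}$ up to a null set. If instead $\alpha$ is an atom of $\nu$, then (since $\alpha\notin\operatorname{supp}\varrho$ and $\varrho$ has no non-atomic part) it can only ever be produced by an error, i.e.\ $\{\chi(n)=\alpha\}=\{\xi(n)=1,\ Y(n)=\alpha\}$ up to a null set. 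The test will distinguish the two regimes by measuring how strongly an occurrence of $\alpha$ is \emph{pinned} to a physical site: a genuine reading reflects the walker sitting at a fixed vertex to which it returns, whereas an erroneous reading is memoryless in time.

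Concretely, I would look at the two-step autocorrelation of the symbol $\alpha$, namely
\[
g(\alpha):=\lim_{N\to\infty}\frac{\#\{n\le N:\ \chi(n)=\chi(n+2)=\alpha\}}{\#\{n\le N:\ \chi(n)=\alpha\}},
\]
a quantity computable from $\chi$ alone. The first step is to justify that this limit exists almost surely and equals the stationary conditional probability $\P(\chi(2)=\alpha\mid\chi(0)=\alpha)$; this follows from the ergodicity of the environment seen from the particle for a recurrent RWRE, tensored with the independent i.i.d.\ noise, so that empirical frequencies of any finite pattern in $\chi$ converge. The denominator has positive density in both regimes (equal to $(1-p)\,\P_{\mathrm{stat}}(\omega(X_0)=\alpha)>0$, respectively $p\,\nu(\{\alpha\})>0$), so $g(\alpha)$ is well defined.

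Next I would compute $g(\alpha)$ in each regime. If $\alpha$ is a noise atom, then $\{\chi(n)=\alpha\}$ and $\{\chi(n+2)=\alpha\}$ are governed by the disjoint independent pairs $(\xi(n),Y(n))$ and $(\xi(n+2),Y(n+2))$, hence are independent, giving $g(\alpha)=\P(\chi(2)=\alpha)=p\,\nu(\{\alpha\})\le p$. If $\alpha$ is a true atom, condition on $\chi(n)=\alpha$, so that $X_n=z$ with $\omega(z)=\alpha$; then $\chi(n+2)=\alpha$ is forced whenever the walker has returned, $X_{n+2}=z$, and the reading is again correct, $\xi(n+2)=0$. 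By uniform ellipticity the two-step return probability is bounded below uniformly in the (unknown) location,
\[
\P_\omega(X_{n+2}=z\mid X_n=z)=\omega(z)\bigl(1-\omega(z+1)\bigr)+\bigl(1-\omega(z)\bigr)\omega(z-1)\ge\kappa,
\]
and since $\xi(n+2)=0$ occurs independently with probability $1-p$, we get $g(\alpha)\ge\kappa(1-p)$.

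Finally, the assumption $0<p<\frac{\kappa}{\kappa+1}$ is exactly equivalent to $\kappa(1-p)>p$, so the two regimes are separated: $g(\alpha)\le p$ for noise atoms and $g(\alpha)\ge\kappa(1-p)>p$ for true atoms. Since $p$ is known, the rule ``declare $\alpha$ an atom of $\varrho$ iff $g(\alpha)>p$'' then decides correctly with probability one. I expect the main obstacle to be the first step, i.e.\ making the almost sure convergence of the empirical two-step statistic to the stationary conditional probability rigorous by invoking (and setting up) the ergodicity of the environment viewed from the particle and its independent coupling with the noise; once this is in place the two bounds are routine, the key point being that the ellipticity bound on the return probability is uniform in the conditioned walker position and therefore survives the conditioning on $\chi(n)=\alpha$.
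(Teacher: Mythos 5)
Your statistic and thresholds are essentially the paper's: the paper also compares an empirical ``repeat'' frequency against the noise level, uses uniform ellipticity to lower-bound a return/step-back probability by $\kappa$, and separates the two regimes via $\kappa(1-p)>p$, i.e.\ exactly the hypothesis $p<\kappa/(\kappa+1)$. (The paper conditions on a double $\alpha$ and looks for a triple, i.e.\ it works with $h(\alpha\mid\alpha\alpha)=h(\alpha\alpha\alpha)/h(\alpha\alpha)$, whereas you condition on a single $\alpha$ and look two steps ahead; your two-step return bound $\omega(z)(1-\omega(z+1))+(1-\omega(z))\omega(z-1)\ge\kappa$ is correct and is, if anything, a slightly cleaner version of the same mechanism.) The dichotomy coming from disjoint supports and purely atomic $\varrho$ is also used in the paper in the same way.

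However, your ``first step'' contains a genuine gap, and it is precisely the obstruction the paper is organized around. You justify the almost sure existence of the limit $g(\alpha)$ and its identification with a stationary conditional probability $\P(\chi(2)=\alpha\mid\chi(0)=\alpha)$ by invoking ``the ergodicity of the environment seen from the particle for a recurrent RWRE''. In situation i) (Sinai's walk, site randomness) no invariant measure for the environment chain that is absolutely continuous with respect to $\P$ is known to exist, so there is no ergodic theorem available for pattern frequencies in $\chi$, and the very notion of a stationary conditional probability is undefined; the paper states this explicitly (footnote to the definition of $h(\alpha\alpha)$: ``a proof would need ergodicity, which is unknown'', and the concluding remark that the measure $\mathbb{Q}$ ``is not known to exist in situation i)'' -- this is exactly why the environment-viewed-from-the-particle argument is reserved for situation ii)). This is not a set-up detail one can fill in; it is an open problem. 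The paper circumvents it by two devices that your proposal lacks: (i) an asymmetric decision rule -- if the empirical limits fail to exist, or the conditioning frequency vanishes, one immediately declares $\alpha\in\mathfrak{A}_\varrho$, which is correct because for $\alpha\in\mathfrak{A}_\nu$ the limits exist a.s.\ and are positive by the law of large numbers for the i.i.d.\ pair $(\xi,Y)$; and (ii) when the limits do exist, the lower bound for true atoms is obtained without ergodicity, by introducing stopping times (successive traversals of $\alpha$-coloured sites), using the strong Markov property to make the relevant step indicators i.i.d.\ $\mathsf{Ber}(\alpha)$, and applying the law of large numbers to the Bernoulli variables $\xi$ evaluated at these stopping times (with a parity split to handle the bounded overlaps). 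Your argument could be repaired along the same lines -- take $T_j$ the successive visits to sites with $\omega(X_{T_j})=\alpha$, bound the numerator below by the return events $\{X_{T_j+2}=X_{T_j}\}$, and run a martingale/LLN argument -- but as written, the appeal to ergodicity is a gap, and for true atoms whose empirical frequencies do not converge your test is simply undefined.
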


The idea of the proof is simply that, if $\alpha \in \mathfrak{A}_\varrho$, given that we saw an $\alpha$ two steps ago we should have a larger probability to see 
an $\alpha$ again, while for $\beta \in \mathfrak{A}_\nu$ this probability should stay independent of previous observations.

\begin{proof}
Consider the asymptotic relative frequency of observing a ''double $\alpha$'' 
\begin{align}\label{eq:h}
h (\alpha\alpha):=&\lim_{n\to\infty}\frac 1 {n}\sum_{k=1}^{n} \mathbbm 1 _{(\alpha,\alpha)} \big(\chi(k-1),\chi(k)\big).
\end{align}
Note that the above limit need not exists and if it exists it could equal $0$.\footnote{We expect this not to be the case since a constant fraction $\approx\varrho(\alpha)^2>0$ of the environment has a bounded value $\alpha\in\mathfrak A_\varrho$ and $(X_k)_k$ is recurrent, but a proof would need ergodicity, which is unknown.} 
However, in both these cases we
can almost surely conclude that $\alpha \in \mathfrak{A}_\varrho$, because
for $\alpha \in \mathfrak{A}_\nu$ the limit exists almost surely and equals $p^2\nu^2(\alpha)> 0$ by the law of large numbers.

Fix $\omega\in\Omega$ and define the hitting times of reading double $\alpha$'s as $\sigma_1=\inf_{k\in\N}\{X_{k-1},X_k\in\omega^{-1}(\alpha)\}$ and $\sigma_j=\inf_{k> \sigma_{j-1}}\{X_{k-1},X_k\in\omega^{-1}(\alpha)\}$ for $j> 1$. 

If for $\alpha \in \mathfrak{A}_\varrho$ the limit \eqref{eq:h} exists and is positive, we can relate it to the relative frequency of stopping times $J_n:=|\{j: \sigma_j\le n \}|$ via
\begin{align*}
\frac{h (\alpha\alpha)}
{(1-p)^2}
=&\lim_{n\to\infty}\frac 1 {n}\sum_{k=1}^{n} \frac{\mathbbm 1_{(0,0)}(\xi(k-1),\xi(k))}{(1-p)^2}\mathbbm 1 _{(\alpha,\alpha)} \big(\omega(X_{k-1}),\omega(X_{k}))\big)\\
=&\lim_{n\to\infty}\frac{J_n}n\cdot\lim_{n\to\infty}\frac{1}{J_n}\sum_{j=1}^{J_n}\frac{\mathbbm 1_{(0,0)}(\xi(\sigma_j-1),\xi(\sigma_j))}{(1-p)^2}=\lim_{n\to\infty}\frac{J_n}n>0
\end{align*}
by the law of large numbers for the independent sequence $\xi(\sigma_j)\sim \mathsf{Ber}(p)$ (separately for even and odd indices). 

Let us now distinguish these stopping times depending on the direction of the step by \[\sigma_j^\pm=\inf_{k> \sigma^\pm_{j-1}}\{X_{k-1},X_k\in\omega^{-1}(\alpha)\text{ and } X_k-X_{k-1}=\pm 1\}.\]
 Then, by the strong markov property the sequence $\mathbbm 1_{+1}(X_{\sigma_{j}^-+1}-X_{\sigma_j^-})\sim\mathsf{Ber}(\alpha)$ for $j\in\N$ is iid (and similarly for $\sigma^+_j$). 
Moreover for $J^\pm_n:=|\{j: \sigma^\pm_j\le n \}|$ we have $J_n^++J_n^-=J_n$. Next consider
$$
\frac{h (\alpha\alpha\alpha)}{(1-p)^3}:=\lim_{n\to\infty}\frac 1 {n(1-p)^3}\sum_{k=1}^{n} \mathbbm 1 _{(\alpha,\alpha,\alpha)} \big(\chi(k-1),\chi(k),\chi(k+1)\big).
$$
Again if the limit does not exists, we conclude that 
$\alpha \in \mathfrak{A}_\varrho$.
Otherwise, for $\alpha \in \mathfrak{A}_\varrho$, by the law of large numbers and abbreviating $\tilde\xi(k)=\frac{\mathbbm 1_{(0,0,0)}(\xi(k-1),\xi(k),\xi(k+1))}{(1-p)^3}$, we have
\begin{align}
&\frac 1 {n}\sum_{k=1}^{n} \tilde\xi(k)\mathbbm 1 _{(\alpha,\alpha,\alpha)} \big(\omega(X_{k-1}),\omega(X_{k}),\omega(X_{k+1})\big)\nonumber\\
\ge &\frac 1 {n}\sum_{k=1}^{n} \tilde\xi(k)\mathbbm 1 _{\omega^{-1}(\alpha)} \big(X_{k-1}\big)\mathbbm 1 _{\omega^{-1}(\alpha)}\big(X_{k})\mathbbm 1 _{X_{k-1}}\big(X_{k+1}\big)\nonumber\\
=&\frac{J_n}n\cdot\frac 1 {J_n}\sum_{j=1}^{J_n} \tilde\xi(\sigma_j)\mathbbm 1_{\pm(1,-1)}\big(X_{\sigma_j}-X_{\sigma_j-1},X_{\sigma_j+1}-X_{\sigma_j} \big)\label{eq:h(aaa)}\\
\sim&\frac{h (\alpha\alpha)}{(1-p)^2}\left[ \frac{J_n^+} {J_n}\frac 1 {J_n^+}\sum_{j=1}^{J^+_n} \tilde\xi(\sigma^+_j)\mathbbm 1_{-1}\big(X_{\sigma^+_j+1}-X_{\sigma^+_j} \big)+\frac{J_n^-} {J_n}\frac 1 {J_n^-}\sum_{j=1}^{J^-_n} \tilde\xi(\sigma^-_j)\mathbbm 1_{+1}\big(X_{\sigma^-_j+1}-X_{\sigma^-_j} \big)\right] \nonumber\\
\sim &\frac{h (\alpha\alpha)}{(1-p)^2}\left[\frac{J_n^+} {J_n}(1-\alpha)+\frac{J_n^-} {J_n}\alpha\right]\ge \kappa \frac{h (\alpha\alpha)}{(1-p)^2}\nonumber
\end{align}
 even though we cannot control any of $J_n^\pm/J_n$ separately. Now set $h (\alpha\mid\alpha\alpha):= 
\frac{h (\alpha\alpha\alpha)}{h (\alpha\alpha)}\ge (1-p)\kappa$. Then, since we assume $p<\frac{\kappa}{\kappa+1}$, 
we have 
\[ h(\alpha\mid\alpha\alpha)\ge(1-p)\kappa>p\ge p \nu(\beta)=h(\beta\mid\beta\beta)\] for all $\beta\in \mathfrak A_\nu.$ Therefore we 
decide that
$\alpha \in \mathfrak A_\varrho$, iff $h(\alpha\mid\alpha\alpha) > p$. If instead the number $M$ of atoms of $\varrho_a$ (or of $\nu_a$) is known, we decide that the $M$ largest occurrences $h(\alpha\mid\alpha\alpha)$ to correspond to $\alpha\in\mathfrak A_\varrho$ (or the $M$ smallest values to belong to $\mathfrak A_\nu$ respectively).
\end{proof}

Let us now prove Theorem \ref{theo:theo1} for the purely atomic case (b) in situation i). 
We will do so by combining Lemma \ref{lem:centrallemma3} with the techniques presented in \cite{GantertNagel}.   	
Gantert and Nagel show in \cite{GantertNagel}, p. 6-7, that for each piece of uncorrupted observations (i.e.\ from
$\chi'$) one can build an empirical measure that converges to $\varrho$ almost surely, when the
length of the piece of uncorrupted observations converges to infinity.
To this end they use the idea of representing the environment as a random walk on a tree: If
$\varrho$ has $N\in[2,\infty]$ many atoms, the consider the $N$-regular tree $T_N$ with root $o$. The vertices of $T_N$ are colored by a mapping $\psi$ with the atoms of $\varrho$ in such a way, that $o$ gets the color $\omega(0)$
and that every vertex has exactly one neighbor of every color. Then the environment can be represented as a doubly infinite nearest neighbor path $\mathcal R$ in $T_N$ by requiring that we start in $o$ and $\psi(\mathcal{R}(z))=\omega(z)$ for all $z \in \Z$. This path can be constructed by 
first following the environment in the direction of the non-negative integers step by step: We start in $o$, the next point $\mathcal{R}(1)$ of the path is the (unique) neighbor of $o$ with color $\omega(1)$, the point $\mathcal{R}(2)$ is the unique neighbor of $\mathcal{R}(1)$ with color $\omega(2)$, etc. Then we do the same with the negative integers, starting in $o$ again.  
For fixed $\omega$ the observations $\chi'$ can be mapped to a path $U$ on $T_N$ by requiring that
$U(0)=o$ and that $U$ is a nearest neighbor path on $T_N$ with $\psi(U_n)=\chi'(n)$ 
for all $n \in \N$. The advantage of this construction is that we are able tell, when the RWRE enters a fresh piece of the 
environment. Indeed, whenever $\mathcal R$ is at a ''new'' point in $T_N$, also $X_n$ is at a new point in $\Z$.

The key idea in \cite{GantertNagel}  is to consider {\it straight, first} crossings of certain segments in the image of $\mathcal R$ by $U$, i.e. first crossings in the shortest possible time. 
This is best illustrated in the case that $|\mathfrak A_\varrho|=2$ (the general case is being reduced to this case). Then $\mathfrak A_\varrho=\{\alpha_1, \alpha_2\}$
with $\varrho(\alpha_i)=\beta_i, i=1,2$, $T_N=\Z$ and, without loss of generality the coloring of $\Z$ is such that 
$\psi(0)=\psi(1)=\alpha_1$ and hence $\psi(4m)=\psi(4m+1)=\alpha_1$, and $\psi(4m+2)=\psi(4m+3)=\alpha_2, m\in \Z$. Consider the 
intervals $I_m=[4m+1, 4m+4]$ in $\Z$. The probability of a straight crossing of $I_m$ when  hit for the first time by $U$ is given by 
$(1-\alpha_2(1-\alpha_2))(1-\beta_2^2)$
(cf. \cite{GantertNagel}) and the events that a straight crossing of $I_m$ occurs, when 
$U$ hits $I_m$ for the first time are independent (
since hitting a new interval with $U$ tells that RWRE is at a ''new'' place in $\Z$). Thus, $\beta_2$ can 
be estimated by applying the law of large numbers to (a suitably rescaled version of) the number of direct crossings of the $I_m$
when $U$ hits it for the first time: If we take a time interval $[1,T]$ and consider the $M$ intervals $I_m$, which are hit for the first time in $[1,T]$ and focus on the subset of size $M'$ of these intervals, for which there is a straight crossing when the interval is hit for the first time. Then $M'/M$ is an estimator for  $(1-\alpha_2(1-\alpha_2))(1-\beta_2^2)$ from which we can deduce an estimator for $\beta_2$. When $M$ and thus $M'$ become large we have that $M'/M \to (1-\alpha_2(1-\alpha_2))(1-\beta_2^2)$ for almost all $\omega$ and almost all realizations of the RWRE. As $T\to \infty$ we also have that $M \to\infty$ for almost all $\omega$ and almost all realizations of the RWRE (the reader is referred to \cite{GantertNagel}, p.
6-7 for more details). Finally, note that the same applies, if we consider a time interval $[t,t+T-1]$ instead of $[1,T]$ since the distribution of the environment is shift-invariant. Of course, in this case we just consider $I_m$'s that are hit for the first time within $[t,t+T-1]$.

Note that we are almost surely able to detect a sequence of pieces of uncorrupted observations of increasing lengths. We are also almost surely able to  find a subsequence of this sequence in which we 
hit a strictly increasing number of $I_m$'s for the first time (relative to this sequence, i.e.\ not taking into account previous observations). 
Applying the reconstruction mechanism from \cite{GantertNagel} to the time intervals
in this subsequence, without taking into account previous observations, 
yields a sequence of empirical measures that almost surely converges to $\varrho$.
\end{proof}

\begin{proof}[Proof of Theorem \ref{theo:theo1} situation ii)]
Now the walker tells us the conductance of the previously passed edge, which could be crossed again backwards in the next step and hence we cannot define the set of atoms  $\mathfrak A_{\varrho+\nu}$ as before. Still, we need to distinguish observations of the environment distribution $\varrho$ from those of the error distribution $\nu$.

To this end introduce the so-called ''environment viewed from the particle'' (cf. \cite[Chapter 1]{BS_RWRE}). Let
$\Omega=(D,1/D)^{\Z}$ and let $\tau_x, x \in \Z$ be the canonical shift by $x$ on $\Omega$. For $\omega \in \Omega$ consider the process
$$\overline \omega := (\overline \omega_n)_{n \in \N_0}:= (\tau_{X_n} \omega)_{n \in \N_0}.$$ Then, $(\overline \omega_n)_{n \in \N_0}$ is a Markov chain on the
state space $\Omega$ with transition probabilities
$$
R(\omega, \omega'):= \frac{\omega(0) }{\omega(0) + \omega(-1)} \delta{\tau_1 \omega}(\omega')+ 
\frac{ \omega(-1)}{\omega(0) + \omega(-1)}\delta{\tau_{-1} \omega}(\omega')
$$
(see \cite[Proposition 1.1]{BS_RWRE} or \cite[Lemma2.1]{biskup_RWRE}). Its invariant measure is absolutely continuous with respect to $\P$ 
with Radon-Nikodym derivative
\begin{equation}\label{eq:RNd} 
  \frac{d\mathbb{Q}}{d\P}(\omega)=\frac {\omega(-1)+\omega(0)} Z,
\end{equation}
where $Z$ is given by $Z:=\int(\omega(-1)+\omega(0))d\P(\omega)=2 \E (\omega(0))< \infty$
due to the uniform ellipticity condition.
Moreover, the Markov shift is ergodic for $\mathbb Q$ (see \cite[Lemma 2.1 and Propositon 2.3]{biskup_RWRE}). 

\begin{lemma}\label{lem:centrallemmaii}
In situation ii) there is a test that decides with probability one 
(in the errors, in the environment $\omega$, and for $\P_\omega$-almost every realization of the walk) correctly, whether an observation $\alpha$ is a realization of $\varrho_{na},\varrho_a,\nu_{na}$ or $\nu_a$. 
\end{lemma}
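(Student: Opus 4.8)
The plan is to classify each observed value $\alpha$ purely through the asymptotic frequency with which it appears in $\chi$, both on its own and immediately repeated. The decisive tool here is the ergodicity of the environment viewed from the particle under $\mathbb{Q}$: unlike in situation i) (see the footnote to Lemma \ref{lem:centrallemma3}), it guarantees that the time averages I will use actually converge. Since $d\mathbb{Q}/d\P$ is bounded above and below by uniform ellipticity, $\mathbb{Q}$ and $\P$ are equivalent, so convergence $\mathbb{Q}$-almost surely transfers to $\P$-almost every $\omega$. Throughout I would repeatedly use the disjoint-support assumption on $\varrho_a$ and $\nu_a$: if $\alpha\in\mathrm{supp}(\varrho_a)$ then $\nu(\{\alpha\})=0$, so $\alpha$ is never produced by a reading error, and if $\alpha\in\mathrm{supp}(\nu_a)$ then $\varrho(\{\alpha\})=0$, so $\alpha$ is never a genuine conductance. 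This cleanly decouples the two atomic contributions.

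First I would set up the frequency hierarchy. Writing $h(\alpha)=\lim_n\frac1n\sum_{k=1}^n\mathbbm 1_\alpha(\chi(k))$, I decompose each summand according to whether $\xi(k)=0$ or $1$. The strong law for the i.i.d.\ errors $(\xi,Y)$, which are independent of the walk, lets me replace $\mathbbm 1_{\xi(k)=0}$ and $\mathbbm 1_{\xi(k)=1}$ by their means, so that $h(\alpha)=(1-p)\,r(\alpha)+p\,\nu(\alpha)$, where $r(\alpha)$ is the $\mathbb{Q}$-almost sure frequency of observing the genuine conductance $\alpha$ in $\chi'$. By the ergodic theorem $r(\alpha)$ equals the $\mathbb{Q}$-measure of crossing an edge of conductance $\alpha$, which is positive precisely when $\alpha$ is an atom of $\varrho$ (a non-atomic conductance corresponds to a single edge, whose local-time fraction vanishes). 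Together with $p\,\nu(\alpha)>0$ iff $\alpha\in\mathrm{supp}(\nu_a)$, this gives the coarse classification: a value appearing only finitely often must come from $\nu_{na}$; a value appearing infinitely often with $h(\alpha)=0$ comes from $\varrho_{na}$ (recurrence forces its edge to be recrossed infinitely often); and $h(\alpha)>0$ signals an atom of $\varrho$ or of $\nu$.

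It remains to split the atoms, and here I would use the lag-one correlation $h(\alpha\alpha)=\lim_n\frac1n\sum_{k=1}^n\mathbbm 1_\alpha(\chi(k-1))\mathbbm 1_\alpha(\chi(k))$. For $\alpha\in\mathrm{supp}(\nu_a)$ the occurrences of $\alpha$ in $\chi$ are exactly the times with $\xi(k)=1,\ Y(k)=\alpha$, an i.i.d.\ Bernoulli$(p\nu(\alpha))$ thinning that is independent of the walk; hence $h(\alpha\alpha)=h(\alpha)^2$ exactly. For $\alpha\in\mathrm{supp}(\varrho_a)$, by contrast, a genuine observation of $\alpha$ means the walker has just crossed a conductance-$\alpha$ edge $e$, and with positive probability it immediately recrosses $e$ (a bounce-back), producing a second $\alpha$; this is exactly the structural feature distinguishing the edge-reporting model ii) from the site model i), where revisiting a site takes two steps and forces the weaker three-letter statistic and the restriction $p<\frac{\kappa}{\kappa+1}$. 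The upshot is the threshold-free, hence $p$-independent, test: decide $\alpha\in\varrho_a$ iff $h(\alpha\alpha)>h(\alpha)^2$ and $\alpha\in\nu_a$ iff $h(\alpha\alpha)=h(\alpha)^2$.

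The hard part will be the strict inequality $h(\alpha\alpha)>h(\alpha)^2$ for $\alpha\in\varrho_a$, uniformly in $p$. Dividing out the common factor $(1-p)^2$, this reduces to showing that the two-sided stationary sequence of indicators $A_k=\mathbbm 1_\alpha(\chi'(k))$ is strictly positively correlated at lag one under $\mathbb{Q}$, i.e.\ $\mathbb{Q}(A_0=A_1=1)>\mathbb{Q}(A_0=1)^2$. Here I would condition on the bounce-back event $B$: on $B$ the edge is recrossed so $A_1=1$ automatically, contributing $b\,r(\alpha)$ with $b=\mathbb{Q}(B\mid A_0=1)$ bounded below by uniform ellipticity, while on $B^c$ the newly crossed adjacent edge has, by the product structure of the environment, a conductance independent of $e$. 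Combining the guaranteed bounce-back gain with this independence yields $\mathbb{Q}(A_1=1\mid A_0=1)>\mathbb{Q}(A_1=1)$ even when atoms of $\varrho$ are abundant, which is the crux of the argument; the remaining bookkeeping, namely existence of the limits from ergodicity and decoupling the error law of large numbers from the walk, is routine.
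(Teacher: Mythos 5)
Your overall architecture coincides with the paper's proof: the coarse classification (a value occurring only once comes from $\nu_{na}$; a value recurring but with $h(\alpha)=0$ comes from $\varrho_{na}$; $h(\alpha)>0$ signals an atom) is exactly the paper's, and your test $h(\alpha\alpha)>h(\alpha)^2$ is algebraically identical to the paper's test $h(\alpha\mid\alpha)=h(\alpha\alpha)/h(\alpha)>h(\alpha)$. The gap sits precisely where you yourself locate the crux: the strict inequality for $\alpha\in\mathfrak A_\varrho$, for which your proposed soft argument (``guaranteed bounce-back gain plus independence of the adjacent edge'') does not work. Under $\mathbb Q$ every frequency is size-biased by conductance: by \eqref{eq:h(a)ii} the unconditional frequency of reading $\alpha$ is $(1-p)\varrho(\alpha)\alpha/\E(\omega(0))$, which \emph{exceeds} $(1-p)\varrho(\alpha)$ whenever $\alpha>\E(\omega(0))$, so the benchmark you must beat is itself inflated. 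Moreover, conditionally on \emph{not} bouncing back, the newly crossed edge is not $\varrho$-distributed: the event $B^c$ has probability $\omega(0)/(\alpha+\omega(0))$ depending on that edge's own conductance, so it too is biased toward large conductances. Neither effect is addressed by your sketch, and together they can eat up the bounce-back gain.

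The decisive evidence that no such soft argument can close the gap is the degenerate case: when $\varrho=\delta_\alpha$ one has $h(\alpha\mid\alpha)=h(\alpha)=1-p$, i.e.\ \emph{equality}, and the paper's explicit computation shows the gap equals $\tfrac{1-p}{\E(\omega(0))}\int_{\alpha^c}\tfrac{1}{\alpha+x}\left[x^2\varrho(\alpha)+\alpha\,\E\big(\omega(0)\mathbbm 1_{\alpha^c}(\omega(0))\big)\right]d\varrho(x)$, which is positive exactly when $\varrho(\alpha^c)>0$ and tends to $0$ as $\varrho(\alpha^c)\to 0$. Hence the strict inequality cannot follow from any uniform-ellipticity lower bound on the bounce-back probability ``even when atoms of $\varrho$ are abundant'': it genuinely requires the standing assumption $\varrho\neq\delta_\alpha$ (cf.\ item (4) of the concluding remarks), which your crux step never invokes --- as written, your argument would ``prove'' the inequality also for deterministic environments, where it is false. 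The paper's resolution is to compute \emph{both} $h(\alpha)$ and $h(\alpha\alpha)$ exactly, via the joint ergodic theorem \eqref{eq:Ergodicity} for $(\tilde\xi,\bar\omega)$ and the explicit density \eqref{eq:RNd}, and then verify positivity of the difference by hand; to repair your proof you would need to carry out this (or an equivalent) computation. The remainder of your outline --- existence of the limits from ergodicity of the environment seen from the particle combined with the mixing Bernoulli errors, and the exact identity $h(\alpha\alpha)=h(\alpha)^2$ for error atoms --- matches the paper and is sound.
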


\begin{proof}
First, note that non-atoms of $\nu$ are the only observations that almost surely occur only once in $\chi$, hence we discard them immediately.

The idea of the proof is similar to the proof of Lemma \ref{lem:centrallemma3}, but now we will be able to make exact calculations. 
Among all the occurrences of an atom $\alpha\in\mathfrak A_{\varrho+\nu}$ (to be defined in \eqref{eq:Atomsii} below) in the observations, determine the empirical occurrence of another subsequent $\alpha$ by
\begin{align}\label{eq:defhii}
h(\alpha\mid\alpha)=\frac{\lim_{n\to\infty}\frac 1 {n-1} \sum_{k=1}^{n-1} \mathbbm 1 _{(\alpha, \alpha)}(\chi(k),\chi(k+1))}{\lim_{n\to\infty}\frac 1 {n-1} \sum_{k=1}^{n-1} \mathbbm 1 _{\alpha}(\chi(k))}.
\end{align}
If $\alpha\in\mathfrak A_\nu$, the existence and positivity of the limits is, a.s. ensured by the law of large numbers. An error atom $\alpha\in\mathfrak A_\nu$ appears independently of the previous observation with probability $h(\alpha\mid\alpha)=p\nu(\alpha)$.

On the other hand for an atom $\alpha\in\mathfrak A_\varrho$ of $\varrho$, the existence of $h$ will follow from an ergodicity argument. Intuitively we should expect $h(\alpha\mid\alpha)>h(\alpha)$, i.e. the empirical occurrence should be larger if we condition on the previously passed edge having conductance $\alpha$, due to the opportunity to jump back along the same edge. In particular we claim to decide $\alpha\in\mathfrak A_\nu$ iff $h(\alpha\mid\alpha)=h(\alpha)$.

To make this rigorous, recall that by ergodicity, see \cite[Equation (2.20)]{biskup_RWRE}, for any $f=f(\omega,\tilde\omega)$ with $\E _{\mathbb Q} \E_\omega|f(\omega,\tau_{X_1}\omega )|<\infty$ we have $\lim_{n\to\infty}   \frac 1 n \sum_{k=0}^{n-1} f(\bar\omega_k,\bar\omega_{k+1})=\E _{\mathbb Q}\E_\omega f(\omega ,\tau_{X_1}\omega)$ for $\P$ almost all $\omega$ and $\P_\omega$ almost all paths $(X_k)_k$. If we include another independent ergodic sequence of random variables, e.g. $\tilde\xi(k)=\frac{1-\xi(k)}{1-p}$ as below, then the tuple $(\tilde\xi,\bar\omega)$ is ergodic since the Bernoulli process $\tilde\xi$ is mixing, see \cite[Theorem 4.10.6. (7)]{dynSys}. Therefore we have the ergodic theorem
\begin{align}\label{eq:Ergodicity}
\lim_{n\to\infty}   \frac 1 n \sum_{k=0}^{n-1} \tilde\xi(k+1)f(\bar\omega_k,\bar\omega_{k+1})=\E_{\xi}(\tilde\xi(0))\E _{\mathbb Q}\E_\omega f(\omega ,\tau_{X_1}\omega)
\end{align}
almost surely in $\P_\xi,\P,\P_\omega$. Hence by choosing a particular $f$, the conditioned empirical frequencies can be computed explicitly in terms of $\mathbb Q$, $p$, and $\nu$. For any realization $\alpha$ of $\varrho$ (both atoms and non atoms), we calculate
\begin{align}
\frac{h(\alpha)}{(1-p)}=&\lim_{n\to\infty}\frac 1 {(n-1)(1-p)}\sum_{k=0}^{n-2} \mathbbm 1 _{\alpha} \big(\chi(k+1)\big)=
\lim_{n\to\infty}\frac 1 {n-1}\sum_{k=0}^{n-2} \tilde\xi(k+1)\mathbbm 1 _{\alpha} \big(\chi'(k+1)\big)\nonumber\\
=&\lim_{n\to\infty}\frac 1 {n-1}\sum_{k=0}^{n-2}\tilde\xi(k+1)\Big[ \mathbbm 1 _{\alpha} \big(\bar\omega_k(0)\big)\mathbbm 1 _{\tau_1\bar\omega_{k}}\big(\bar\omega_{k+1}\big)+\mathbbm 1 _{\alpha} \big(\bar\omega_k(-1)\big)\mathbbm 1 _{\tau_{-1}\bar\omega_{k}}\big(\bar\omega_{k+1}\big)\Big]\nonumber\\
=&\int\int 
\Big[ \mathbbm 1 _{\alpha} \big(\omega(0)\big)\mathbbm 1 _{1}(X_1)+\mathbbm 1 _{\alpha} \big(\omega(-1)\big)\mathbbm 1 _{-1}(X_1)\Big]
d\P_\omega(X) d\mathbb Q(\omega)\nonumber\\
=&  \int \mathbbm 1 _{\alpha} \big(\omega(0)\big)\P_\omega( X_1=1)d\mathbb Q(\omega)+\int\mathbbm 1 _{\alpha} \big(\omega(-1)\big)\P_\omega( X_1=-1)d\mathbb Q(\omega) \nonumber\\
=& \int \mathbbm 1 _{\alpha} \big(\omega(0)\big)\frac{\omega(0)}{\omega(0)+\omega(-1)} \frac{\omega(0)+\omega(-1)}Z d\mathbb P(\omega) \nonumber\\
&\qquad +\int\mathbbm 1 _{\alpha} \big(\omega(-1)\big)\frac{\omega(-1)}{\omega(-1)+\omega(0)} \frac{\omega(0)+\omega(-1)}Z d\mathbb P(\omega)
=2\varrho(\alpha)\frac{\alpha}Z\label{eq:h(a)ii}
\end{align}
for any $\alpha\in\mathfrak A_{\varrho}$. In the second line we split the event into $\alpha$ arising from a step to the right or left, then we apply ergodicity \eqref{eq:Ergodicity}. Accordingly, we decide $\alpha$ to be a realization of the non-atomic part of $\varrho$ iff it occurs more than once in $\chi$ but $h(\alpha)=0$ and we define the atoms by
\begin{align}\label{eq:Atomsii}
    \mathfrak A_{\varrho+\nu}:=\{\alpha: h(\alpha)>0 \}.
\end{align}
Analogously, choosing $f(\omega,\tilde\omega)=\mathbbm 1_{(\alpha,\alpha)}(\omega(0),\tilde\omega(0))$ and $\tilde\xi(k)=\frac{\mathbbm 1_{(0,0)}(\xi(k),\xi(k+1))}{(1-p)^2}$ yields
\begin{align*}
&\frac{h(\alpha\alpha)}{(1-p)^2}:=\lim_{n\to\infty}\frac 1 {(n-1)(1-p)^2}\sum_{k=0}^{n-2} \mathbbm 1 _{\alpha\alpha} \big(\chi(k+1),\chi(k+2)\big)\\
&= \int \mathbbm 1 _{\alpha,\alpha} \big(\omega(0),\omega(1)\big)\P_\omega( X_1=1,X_2=2)+\mathbbm 1 _{\alpha,\alpha} \big(\omega(-1),\omega(-2)\big)\P_\omega( X_1=-1,X_2=-2) \\
\quad&+ \mathbbm 1 _{\alpha} \big(\omega(0)\big)\P_\omega( X_1=1,X_2=0)+\mathbbm 1 _{\alpha} \big(\omega(-1)\big)\P_\omega( X_1=-1,X_2=0)d\mathbb Q(\omega)\\
&=2\varrho(\alpha)^2\frac{\alpha}{2\alpha}\frac{\alpha}{Z}+2\varrho(\alpha)\frac\alpha Z\E\Big(\frac\alpha{\alpha+\omega(0)}\Big).
\end{align*}
Therefore we end up with 
\[ h(\alpha\mid\alpha)=(1-p)\Big[\varrho(\alpha)\frac 1 2 + \E\Big(\frac{\alpha}{\alpha+\omega(0)}\Big)\Big]=(1-p)\Big[\varrho(\alpha)+\E\Big(\frac{\alpha}{\alpha+\omega(0)}\mathbbm 1_{\alpha^c}(\omega(0))\Big)\Big]\] for $\alpha\in\mathfrak A_\varrho$ and $h(\alpha)=(1-p)\varrho(\alpha)\frac{\alpha}{\E(\omega(0))}$.\footnote{Note that such terms all have an obvious interpretation: $h(\alpha)$ is proportional to its conductance $\alpha$ and to the probability $\varrho(\alpha)$ of creating exactly this conductance in $\omega$ and $h(\alpha\mid\beta)=(1-p)\varrho(\alpha)\frac{\alpha}{\alpha+\beta}$ if $\alpha, \beta\in\mathfrak A_\varrho,\alpha\neq \beta$ describes the probability of creating an $\alpha$-edge, then crossing it and reading it.} In particular their difference is given by
\begin{align*}
    \frac{\E(\omega(0))}{1-p}(h(\alpha|\alpha)&-h(\alpha))=\varrho(\alpha)\big(\alpha\varrho(\alpha)+\int_{\alpha^c}xd\varrho(x) \big) +\int_{\alpha^c}\frac{\alpha\varrho(\alpha)}{\alpha+x}\frac{\E(\omega(0))}{\varrho(\alpha)}d\varrho(x)-\alpha\varrho(\alpha)\\
    =&-\alpha\varrho(\alpha^c)\varrho(\alpha)+\int_{\alpha^c} \frac{\alpha\varrho(\alpha)}{\alpha+x}\left[x+\frac{x^2}{\alpha}+\alpha+\frac{\E(\omega(0)\mathbbm 1_{\alpha^c}(\omega(0)))}{\varrho(\alpha)}  \right]  d\varrho(x)\\
    =&\int_{\alpha^c}\frac{1}{\alpha+x}\left[x^2\varrho(\alpha)+\alpha \E(\omega(0)\mathbbm 1_{\alpha^c}(\omega(0)))\right]d\varrho(x)>0
\end{align*}
iff $\varrho\neq \delta_\alpha$ is not singular. Then, we decide $\alpha\in\mathfrak A_\varrho$ iff $h(\alpha|\alpha)>h(\alpha)$.
\end{proof}

In order to reconstruct the distribution $\varrho$, we will again consider the cases (a) $\varrho\neq\varrho_a$ and (b) $\varrho=\varrho_a$ separately, similar to situtaion i). 
Case (a): Define the set
\begin{align*}
\mathcal T'&:=\{n \in \N: \chi(n-1) \notin \mathfrak{A}_{\varrho+\nu}, \chi(m) \neq \chi(n-1) \forall m<n-1, \exists m >n: \chi(m)= \chi(n-1) \}
\end{align*}
consisting of times $n$ at which we read a part of the environment that have never been seen before. Again, we consider the set of observations made right after these markers, conditioned on that $(X_n)$ has not stepped back:
\begin{align*}
\mathcal  O': =\{\chi(n) \notin \mathfrak{A}_{\nu}: n\in \mathcal T', \chi(n)\neq\chi(n-1), \exists m>n: \chi (m)=\chi(n)\}
\end{align*}
consisting of independent new conductances. Hence analogously to situation i), we enumerate $\mathcal O'$ by $\eta_1, \eta_2, \ldots $ and by the law of large numbers for the empirical measure we obtain
$\frac 1 N \sum_{k=1}^N \delta_{\eta_k} \to \varrho$, $\P_\omega$-almost surely.

Case (b): The reconstruction mechanism from situation i) case (b) cannot be applied in situation ii) (at least not without adapting the techniques of \cite{GantertNagel} to this setting, which seems difficult). Instead, we will use the technique of the ''environment viewed from the particle'' again.
Using what we observed in \eqref{eq:h(a)ii}, we can reconstruct the law of the environment via 
$h(\alpha)=2(1-p)\varrho(\alpha)\tfrac{\alpha}Z$ for any $\alpha\in\mathfrak A_{\varrho}$. Thus, we may reconstruct the the atomic part $\varrho_a$ of the distribution of the environment by $\varrho(\alpha)=\frac{Zh(\alpha)}{2\alpha(1-p)}$. Note that $p$ as well as the normalization $Z$ in general will not be known, however it is finite and simply ignoring it gives us $\varrho$ up to proportionality factor and this can be easily overcome by knowing $\sum_{\alpha\in\mathfrak A_\varrho}\varrho(\alpha)=1$ in case (b).
\end{proof}

\begin{rem}
The previous case distinction is not necessary and was made in order to recycle objects and ideas from before. Alternatively, it is possible to reconstruct $\varrho$ directly via the distribution function of $\omega(0)$ under $\P$, i.e.\ using functions of the form $ \tfrac{Z}{2 x} \mathbbm 1_{(-\infty,\alpha]}(x)$ in \eqref{eq:h(a)ii}. It is then obvious that the same approach and result holds true on the random conductance model in $\Z^d$ for any dimension $d\in\N$ as long as the ellipticity condition is satisfied so that $\bar\omega$ is ergodic with invariant distribution $\mathbb Q$. Note that the RWRE is not recurrent anymore for $d\ge 3$, but we only used recurrence to identify the non-atoms of $\nu$, hence for $d\ge 3$ we would need the additional assumption $\nu_{na}=0$. In particular for $p=0$, this answers Question 4 in \cite{GantertNagel} for a related model. However, in this note we decided to only present $d=1$ for simplicity and comparison to situation i), where higher dimensions are out of reach. 
\end{rem}

\begin{proof}[Proof of Theorem \ref{theo:theo3}]
Recall that we showed that in the proof of Theorem \ref{theo:theo1} in both situations i)
and ii) we are able to detect whether an element in our record $\chi(n)$ was produced by an observation from $\chi'(n)$ or by a random
variable $Y(n)$. 

Once we have distinguished the elements of $\chi'$ from the elements produced by $Y$ in $\chi$ we can basically follow the recipe described in \cite{GantertNagel}: Take two observations $\omega^{(1)}$ and $\omega^{(2)}$ in $\chi$ that are realizations of
$\varrho_{na}$. By recurrence and $p<1$ these will occur infinitely often in $\chi$. We will take all ''shortest crossing'' (a common concept in scenery reconstruction) between $\omega^{(1)}$ and $\omega^{(2)}$, i.e.\ we consider the following set
\begin{eqnarray*}
S_{\omega^{(1)},\omega^{(2)}}:=&& \hskip-0.4cm \{(\chi(m), \ldots, \chi(m+l)): \chi(m)=\omega^{(1)}, \chi(m+l)=\omega^{(2)} \\
&&\qquad \mbox{ and } l= \min\{l':\exists n \mbox{ such that } \chi(n)=\omega^{(1)}, \chi(n+l')=\omega^{(2)} \}\}.
\end{eqnarray*}
Then, by Borel-Cantelli, for almost all environments $\omega$ such that $(X_n)$ is recurrent on $\omega$ we have that $P_\omega$-almost surely $l$ corresponds to direct crossings from the (unique) point $z \in \Z$  with
$\omega(z)=\omega^{(1)}$ to the (unique) point $z' \in \Z$  with $\omega(z')=\omega^{(2)}$, i.e.\ $l$ is the distance between $z$ and $z'$. Depending on
the size of $l$ and the value of $p$, many of the strings $(\chi(m), \ldots, \chi(m+l)) \in S_{\omega^{(1)},\omega^{(2)}}$ may have values $\chi(n), m<n<m+l$ that are corrupted,
i.e. $\chi(n)$ was produced by $Y(n)$ rather than by $\chi'(n)$. However, we know which values in $\chi$ are produced by $\nu$ and $\varrho$, respectively. Discarding those strings from $S_{\omega^{(1)},\omega^{(2)}}$ that contain corrupted elements, we are almost surely still left with a non-empty (actually even infinite) set, that contains a piece of the random environment.
We proceed by taking a new point $\omega^{(3)}\in \chi$  that is a realization of
$\varrho_{na}$ such that $\omega^{(3)}$ is not in the already reconstructed piece of environment. We construct the corresponding set $S_{\omega^{(2)},\omega^{(3)}}$ and the resulting
piece of the random environment. If $\omega^{(1)}$ is contained in this piece, we discard this reconstruction step and take a new $\omega^{(3)}$. Otherwise we assemble the pieces together, etc. This gives a reconstruction of the environment to one side of $\omega^{(1)}$.
For the other side we do the same thing with points $\omega^{(0)}, \omega^{(-1)}, \ldots \in \chi$ such that $\omega^{(0)}, \omega^{(-1)}$ are not in the already reconstructed piece of environment. (The sense of this two-sided procedure is to avoid a description of doing a jigsaw puzzle with the reconstructed pieces of environment we would need to do, otherwise).

Finally we can decide about the orientation of the environment: By the assumption on the distribution of $\omega(0)$, for $\P$-almost all $\omega$ we will eventually have reconstructed a point in the environment $\omega(z) \neq \frac 12$ in situation i) or $\omega(z) \neq \omega(z-1)$ in situation ii) (otherwise the proofs of these two cases coincide). From here the majority of the observations (adjusted by removing corrupted observation) will walk to the right if $\omega(z) >\frac 12 $ and to the left, otherwise. This finishes the proof of Theorem \ref{theo:theo3}.
\end{proof}

\section{Concluding remarks}

\begin{rem}
\begin{enumerate}
\item Of course Theorem \ref{theo:theo3} implies Theorem \ref{theo:theo1} under the corresponding conditions.
However, the proof of Theorem \ref{theo:theo3} requires the same techniques (plus additional work) as the first.

\item Note that for the situations that $p=0$ 
Gantert and Nagel in \cite{GantertNagel} proved that the law of the environment can be reconstructed almost surely, even if RWRE is transient.

\item In situation i), similar to the observation in \cite{GantertNagel}, if $\varrho_{na} \neq 0$, we can detect from $\chi$
whether $(X_n)$ is recurrent in $\omega$ or not: No matter, whether $(X_n)$ is recurrent or not, for almost all $
\omega$, $Y$, and $\xi$ there is an $\alpha$ in the support of $\varrho_{na}+\nu_{na}$ such that there are $n\neq m \in \N$ with $\chi(n)=
 \alpha $ as well as $\chi(m)=\alpha$, because there is a positive probability of jumping backwards.
 This tells us that $\alpha$ is created from $\varrho$. Hence, 
 if $\alpha$ occurs
in $\chi$ infinitely often, $(X_n)$ is recurrent in $\omega$, otherwise it is not.

However, it seems hard to find a way to test whether RWRE is recurrent in the general setup. Indeed, in the situation of Theorem
\ref{theo:theo1}, we could detect whether RWRE is recurrent, if we knew which realizations of the non-atomic parts belong to
$\varrho$ and which of them belong to $\nu$. However, to find this out we strongly use the knowledge that RWRE is recurrent.
Of course, this entire point just applies in situation i).

\item In situation ii), the assumption that the environment should be non-deterministic ($\varrho\neq \delta_\alpha$) is necessary for the reconstruction. If all conductances are equal, then it is impossible to distinguish $\mathfrak A_\varrho$ and $\mathfrak A_\nu$ as long as $p$ is unknown.

\item At first sight it might be reasonable to expect a proof for situation i) in the same way as for situation ii): The empirical occurrences of an $\alpha$ after we have already seen some $\alpha$'s should be bigger due to the opportunity to jump back. As we saw in the proof of Lemma \ref{lem:centrallemmaii} however, this is given by a conditioned probability under the equivalent invariant measure $\mathbb Q$, which is not known to exist in situation i). Molchanov \cite[p.273-280]{Molchanov} presents two models of environment with site randomness, where $\mathbb Q$ is known: Either $\E(\tfrac {1-\omega(0)}{\omega(0)})<1$, which excludes the necessary recurrence assumption, or in a non-i.i.d.\ environment, where adjacent values depend in exactly the way handled by our situation ii). From this point of view, it is very natural to consider situation ii). 

\end{enumerate}
\end{rem}

\section*{Acknowledgments} 
We are grateful to Nina Gantert for many hints on the behavior of RWRE.
We also thank an anonymous referee and and anonymous
Associate Editor for many useful remarks that spotted a mistake in the first version and helped to improve the paper.

\end{document}